\def\NZQ{\mathbb}               
\def\ZZ{{\NZQ Z}}
\def\RR{{\NZQ R}}
\def\frk{\mathfrak}               
\def\Phi{{\frk N}}
\def\eb{{\bold e}}
\def\opn#1#2{\def#1{\operatorname{#2}}} 
\opn\chara{char} \opn\length{\ell} \opn\pd{pd} \opn\rk{rk}
\opn\projdim{proj\,dim} \opn\injdim{inj\,dim} \opn\rank{rank}
\opn\depth{depth} \opn\grade{grade} \opn\height{height}
\opn\embdim{emb\,dim} \opn\codim{codim}
\opn\Tr{Tr} \opn\bigrank{big\,rank}
\opn\superheight{superheight}\opn\lcm{lcm}
\opn\trdeg{tr\,deg}
\opn\reg{reg} \opn\lreg{lreg} \opn\ini{in} \opn\lpd{lpd}
\opn\size{size}\opn{\mult}{mult}
\opn\div{div} \opn\Div{Div} \opn\cl{cl} \opn\Cl{Cl}
\opn\Spec{Spec} \opn\Supp{Supp} \opn\supp{supp} \opn\Sing{Sing}
\opn\Ass{Ass} \opn\Min{Min}
\opn\Ann{Ann} \opn\Rad{Rad} \opn\Soc{Soc}
\opn\Syz{Syz} \opn\Im{Im} \opn\Ker{Ker} \opn\Coker{Coker}
\opn\Am{Am} \opn\Hom{Hom} \opn\Tor{Tor} \opn\Ext{Ext}
\opn\End{End} \opn\Aut{Aut} \opn\id{id} \opn\ini{in}
\opn\nat{nat}
\opn\pff{pf}
\opn\Pf{Pf} \opn\GL{GL} \opn\SL{SL} \opn\mod{mod} \opn\ord{ord}
\opn\Gin{Gin}
\opn\Hilb{Hilb}\opn\adeg{adeg}\opn\std{std}\opn\ip{infpt}
\opn\Pol{Pol}
\opn\sat{sat}
\opn\Var{Var}
\opn\Gen{Gen}
\opn\aff{aff} \opn\con{conv} \opn\relint{relint} \opn\st{st}
\opn\lk{lk} \opn\cn{cn} \opn\core{core} \opn\vol{vol}
\opn\link{link} \opn\star{star}
\opn\gr{gr}
\def\Cc{{\mathcal C}}
\def\Oc{{\mathcal O}}
\def\Jc{{\mathcal J}}
\def\Fc{{\mathcal F}}
\def\Pc{{\mathcal P}}
\def\Qc{{\mathcal Q}}
\def\pot#1#2{#1[\kern-0.28ex[#2]\kern-0.28ex]}
\opn\dirlim{\underrightarrow{\lim}}
\opn\inivlim{\underleftarrow{\lim}}
\let\to=\rightarrow
\def\Implies{\ifmmode\Longrightarrow \else
        \unskip${}\Longrightarrow{}$\ignorespaces\fi}
\def\implies{\ifmmode\Rightarrow \else
        \unskip${}\Rightarrow{}$\ignorespaces\fi}
\def\iff{\ifmmode\Longleftrightarrow \else
        \unskip${}\Longleftrightarrow{}$\ignorespaces\fi}
\newtheorem{Theorem}{Theorem}[section]
\newtheorem{Lemma}[Theorem]{Lemma}
\newtheorem{Corollary}[Theorem]{Corollary}
\newtheorem{Example}[Theorem]{Example}
\newtheorem{Conjecture}[Theorem]{Conjecture}
\let\epsilon\varepsilon
\let\phi=\varphi
\let\kappa=\varkappa
\def\qed{\ifhmode\textqed\fi
      \ifmmode\ifinner\quad\qedsymbol\else\dispqed\fi\fi}
\def\textqed{\unskip\nobreak\penalty50
       \hskip2em\hbox{}\nobreak\hfil\qedsymbol
       \parfillskip=0pt \finalhyphendemerits=0}
\def\dispqed{\rlap{\qquad\qedsymbol}}
\opn\dis{dis}
\def\pnt{{\raise0.5mm\hbox{\large\bf.}}}
\opn\Lex{Lex}
\begin{document}
\title{Unimodular equivalence of order and chain polytopes}
\author{Takayuki Hibi and Nan Li}
\thanks{}
\subjclass{}
\address{Takayuki Hibi,
Department of Pure and Applied Mathematics,
Graduate School of Information Science and Technology,
Osaka University,
Toyonaka, Osaka 560-0043, Japan}
\email{hibi@math.sci.osaka-u.ac.jp}
\address{Nan Li,
Department of Mathematics,
Massachusetts Institute of Technology,
Cambridge, MA 02139, USA}
\email{nan@math.mit.edu}
\thanks{}
\begin{abstract}
The problem when the order polytope and the chain polytope of a finite partially ordered set
are unimodularly equivalent will be solved.
\end{abstract}
\subjclass{}
\thanks{
{\bf 2010 Mathematics Subject Classification:}
Primary 52B05; Secondary 06A07. \\
\hspace{5.3mm}{\bf Key words and phrases:}
chain polytope, order polytope, partially ordered set.}
\maketitle
\section*{Introduction}
The combinatorial structure of the order polytope $\Oc(P)$ and the chain polytope $\Cc(P)$
of a finite poset $P$
(partially ordered set) is discussed in \cite{Stanley}.
On the other hand, in \cite{Hibi} and \cite{TN}, it is shown that the toric ring of
each of $\Oc(P)$ and $\Cc(P)$ is an algebra with straightening laws (\cite[p. 124]{HibiRedBook})
on the distributive lattice
$L = \Jc(P)$, where $\Jc(P)$ is the set of all poset ideals of $P$, ordered by inclusion.
In the present paper,
the problem when $\Oc(P)$ and $\Cc(P)$ are unimodularly equivalent will be solved.

\section{The number of facets of chain and order polytopes}
Let $P = \{x_{1}, \ldots, x_{d}\}$ be a finite poset.
For each subset $W \subset P$, we associate $\rho(W) = \sum_{i \in W}\eb_{i} \in \RR^{d}$,
where $\eb_{1}, \ldots, \eb_{d}$ are the unit coordinate vectors of $\RR^{d}$.
In particular $\rho(\emptyset)$ is the origin of $\RR^{d}$.
A {\em poset ideal} of $P$ is a subset $I$ of $P$ such that,
for all $x_{i}$ and $x_{j}$ with
$x_{i} \in I$ and $x_{j} \leq x_{i}$, one has $x_{j} \in I$.
An {\em antichain} of $P$ is a subset
$A$ of $P$ such that $x_{i}$ and $x_{j}$ belonging to $A$ with $i \neq j$ are incomparable.
We say that $x_{j}$ {\em covers} $x_{i}$ if $x_{i} < x_{j}$ and
$x_{i} < x_{k} < x_{j}$ for no $x_{k} \in P$.
A chain $x_{j_{1}} < x_{j_{2}} < \cdots < x_{j_{\ell}}$ of $P$ is called
{\em saturated} if $x_{j_{q}}$ covers $x_{j_{q-1}}$ for $1 < q \leq \ell$.

Recall that the {\em order polytope} of $P$ is the convex polytope $\Oc(P) \subset \RR^{d}$
which consists of those $(a_{1}, \ldots, a_{d}) \in \RR^{d}$ such that
$0 \leq a_{i} \leq 1$ for every $1 \leq i \leq d$ together with
\[
a_{i} \geq a_{j}
\]
if $x_{i} \leq x_{j}$ in $P$.
The {\em chain polytope} of $P$ is the convex polytope $\Cc(P) \subset \RR^{d}$
which consists of those $(a_{1}, \ldots, a_{d}) \in \RR^{d}$ such that
$a_{i} \geq 0$ for every $1 \leq i \leq d$ together with
\[
a_{i_{1}} + a_{i_{2}} + \cdots + a_{i_{k}} \leq 1
\]
for every maximal chain $x_{i_{1}} < x_{i_{2}} < \cdots < x_{i_{k}}$ of $P$.

One has $\dim \Oc(P) = \dim \Cc(P) = d$.
The vertices of $\Oc(P)$ is
those $\rho(I)$ such that $I$ is a poset ideal of $P$
(\cite[Corollary 1.3]{Stanley})
and the vertices of $\Cc(P)$ is
those $\rho(A)$ such that $A$ is an antichain of $P$
(\cite[Theorem 2.2]{Stanley}).
In particular the number of vertices of $\Oc(P)$ is equal to that of $\Cc(P)$.
Moreover, the volume of $\Oc(P)$ and that of $\Cc(P)$ are equal to $e(P)/d!$, where
$e(P)$ is the number of linear extensions of $P$ (\cite[Corollary 4.2]{Stanley}).

Let $m_{*}(P)$ (resp. $m^{*}(P)$) denote the number of minimal (reps. maximal)
elements of $P$ and $h(P)$ the number of edges
of the Hasse diagram (\cite[p. 243]{StanleyEC}) of $P$.
In other words, $h(P)$ is the number of pairs $(x_{i}, x_{j}) \in P \times P$
such that $x_{j}$ covers $x_{i}$.
Let $c(P)$ denote the number of maximal chains of $P$.
It then follows immediately that

\begin{Lemma}
\label{numberoffacet}
The number of facets of $\Oc(P)$ is $m_{*}(P) + m^{*}(P) + h(P)$ and that
of $\Cc(P)$ is $d + c(P)$.
\end{Lemma}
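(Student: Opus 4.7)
The strategy is to read off the facets from minimal defining systems of inequalities. Since $\Oc(P)$ and $\Cc(P)$ are both $d$-dimensional, facets are in bijection with the non-redundant inequalities in a defining presentation, so it suffices to sift the redundant inequalities from the lists already given and then verify that each survivor cuts out a face of dimension exactly $d-1$.

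For $\Oc(P)$ I would group the defining inequalities into three families: the upper bounds $a_{i} \leq 1$, the lower bounds $a_{i} \geq 0$, and the comparability inequalities $a_{j} \leq a_{i}$ attached to pairs $x_{i} \leq x_{j}$. With the convention of the paper (so that $x_{i} \leq x_{j}$ forces $a_{i} \geq a_{j}$), an upper bound $a_{i} \leq 1$ is implied by the chain $a_{i} \leq a_{k} \leq 1$ as soon as there exists $x_{k} < x_{i}$, hence survives precisely when $x_{i}$ is minimal; symmetrically $a_{i} \geq 0$ survives precisely when $x_{i}$ is maximal. A comparability $a_{j} \leq a_{i}$ with $x_{i} < x_{j}$ non-covering is implied by inserting an intermediate $x_{k}$ and chaining $a_{j} \leq a_{k} \leq a_{i}$, so only covering relations contribute. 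The three families then account for $m_{*}(P) + m^{*}(P) + h(P)$ facets.

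For $\Cc(P)$ the two defining families are the nonnegativity conditions $a_{i} \geq 0$ and the maximal chain sums $\sum_{x_{j} \in C} a_{j} \leq 1$. The chain sum inequalities do not bound any coordinate from below, so each nonnegativity condition is necessary; and since maximal chains give pairwise distinct supports, none of the chain sum inequalities is implied by the others combined with the coordinate bounds. This yields $d + c(P)$ facets.

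The genuine content, and the main obstacle I foresee, is confirming that each surviving inequality defines a face of dimension exactly $d - 1$ rather than a lower-dimensional face. I would handle this by exhibiting, for each candidate facet, $d$ affinely independent vertices of the polytope saturating the corresponding inequality, using the vertex descriptions recalled from \cite{Stanley}: characteristic vectors $\rho(I)$ of poset ideals in the case of $\Oc(P)$ and $\rho(A)$ of antichains in the case of $\Cc(P)$. For instance, the cover relation $x_{i} \lessdot x_{j}$ yields the hyperplane $a_{i} = a_{j}$, which is saturated by all $\rho(I)$ with $x_{i}, x_{j}$ simultaneously in or out of $I$; and the maximal chain $C$ yields the hyperplane $\sum_{x_{j} \in C} a_{j} = 1$, saturated by antichains meeting $C$ in exactly one element. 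Once sufficiently many such vertices are produced, the dimension count is routine and the lemma follows.
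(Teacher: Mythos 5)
Your argument is correct and matches what the paper intends: the paper offers no proof at all (it introduces the lemma with ``it then follows immediately''), relying on the known facet descriptions of $\Oc(P)$ and $\Cc(P)$ from Stanley's \emph{Two poset polytopes}, which is exactly what you reconstruct. Your redundancy analysis and the plan of exhibiting $d$ affinely independent saturating vertices (ideals $\rho(I)$ with $x_i,x_j$ both in or both out of $I$ for a cover relation; singletons of a maximal chain $C$ together with pairs $\{x_m, x_{i_j}\}$ with $x_m\notin C$ incomparable to some $x_{i_j}\in C$ for a chain facet) go through routinely, so the only caveat is that the final dimension counts are sketched rather than written out.
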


\begin{Corollary}
\label{facetinequality}
The number of facets of $\Oc(P)$ is less than or equal to that of $\Cc(P)$.
\end{Corollary}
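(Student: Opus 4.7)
\medskip

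\noindent\textbf{Proof plan.}
By Lemma~\ref{numberoffacet} it suffices to prove the combinatorial inequality
\[
m_{*}(P) + m^{*}(P) + h(P) \le d + c(P).
\]
I would induct on $d = |P|$. The base case $d = 1$ is immediate: $P$ consists of a single isolated element and both sides equal $2$.

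For $d \ge 2$, pick a minimal element $x \in P$ and set $P' = P \setminus \{x\}$. If $x$ is isolated (i.e., also maximal), then $m_{*}$, $m^{*}$, $d$, and $c$ each drop by $1$ upon passing from $P$ to $P'$ while $h$ is unchanged, so the inequality for $P$ follows directly from the inductive hypothesis for $P'$.

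Suppose instead that $x$ has at least one cover. Partition the covers of $x$ into \emph{Type~1}, those $z$ for which $x$ is the unique element of $P$ strictly below $z$, and \emph{Type~2}, those $z$ for which some $w \ne x$ also satisfies $w < z$; let $N_1$ and $N_2$ denote the respective counts, so $N_1 + N_2$ equals the number of covers of $x$. The key observation is that a maximal chain of $P$ passing through $x$ must begin at $x$, and after deleting $x$ it remains maximal in $P'$ exactly when its second element is of Type~1 (since the Type~1 covers are precisely the elements that become newly minimal in $P'$). From this one computes
\[
m_{*}(P') = m_{*}(P) - 1 + N_1, \qquad m^{*}(P') = m^{*}(P), \qquad h(P') = h(P) - N_1 - N_2,
\]
\[
|P'| = d - 1, \qquad c(P') = c(P) - \sum_{z\text{ of Type 2}} U(z),
\]
where $U(z)$ denotes the number of saturated chains from $z$ upward to a maximal element of $P$.

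Substituting these into the inductive hypothesis $m_{*}(P') + m^{*}(P') + h(P') \le |P'| + c(P')$ and simplifying reduces the goal to $N_2 \le \sum_{z\text{ of Type 2}} U(z)$, which holds trivially since $U(z) \ge 1$ for every $z$. The main delicate step in this plan is the correct computation of $c(P')$; once that is in hand, the rest is algebraic bookkeeping.
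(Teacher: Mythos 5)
Your argument is correct and follows essentially the same route as the paper's proof: induct on $d$, delete a minimal element, partition its covers into those lying above only that element (the paper's $\gamma_{j}$'s, your Type~1) and those with another element below (the $\beta_{i}$'s, your Type~2), and track the changes in $m_{*}$, $m^{*}$, $h$, $c$; your final reduction to $N_{2} \leq \sum_{z} U(z)$ is precisely the paper's inequality $c(P \setminus \{\alpha\}) \leq c(P) - s$. The only (harmless) difference is that you explicitly treat the base case and the case of an isolated minimal element, which the paper leaves implicit by choosing a minimal element that is not maximal.
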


\begin{proof}
We work with induction on $d$, the number of elements of $P$.
Choose a minimal element $\alpha$ of $P$ which is not maximal.
One can assume
\begin{eqnarray}
\label{Boston}
m_{*}(P \setminus \{ \alpha \}) + m^{*}(P \setminus \{ \alpha \}) + h(P \setminus \{ \alpha \})
\leq (d - 1) + c(P \setminus \{ \alpha \}).
\end{eqnarray}
Let $\beta_{1}, \ldots, \beta_{s}, \gamma_{1}, \ldots, \gamma_{t}$ be the elements of $P$
which cover $\alpha$ such that each $\beta_{i}$ covers at least two elements of $P$
and each $\gamma_{j}$ covers no element of $P$ except for $\alpha$.
Let $N_{i}$ denote the number of saturated chains of the form
$\beta_{i} < x_{j_{1}}< x_{j_{2}} < \cdots$.  Then
\begin{eqnarray*}
m_{*}(P \setminus \{ \alpha \}) & = & m_{*}(P) - 1 + t; \\
m^{*}(P \setminus \{ \alpha \}) & = & m^{*}(P); \\
h(P \setminus \{ \alpha \}) & = & h(P) - (s + t); \\
c(P \setminus \{ \alpha \}) & = & c(P) - \sum_{i=1}^{s} N_{i}.
\end{eqnarray*}
Hence
\begin{eqnarray}
\label{Sapporo}
c(P \setminus \{ \alpha \}) \leq c(P) - s.
\end{eqnarray}
One has
\[
m_{*}(P \setminus \{ \alpha \}) + m^{*}(P \setminus \{ \alpha \}) + h(P \setminus \{ \alpha \})
= m_{*}(P) + m^{*}(P) + h(P) - (s + 1)
\]
and
\[
d - 1 + c(P \setminus \{ \alpha \}) \leq d - 1 + c(P) - s = d + c(P) - (s + 1).
\]
Thus, by virtue of the inequality $(\ref{Boston})$, it follows that
\begin{eqnarray*}
\label{Sydney}
m_{*}(P) + m^{*}(P) + h(P) \leq d + c(P),
\end{eqnarray*}
as desired.
\, \, \, \, \, \, \, \, \, \, \, \, \, \, \, \, \, \, \, \,
\, \, \, \, \, \, \, \, \, \, \, \, \, \, \, \, \, \, \, \,
\, \, \, \, \, \, \, \, \, \, \, \, \, \, \, \, \, \, \, \,
\end{proof}

We now come to a combinatorial characterization of $P$ for which the number of facets of
$\Oc(P)$ is equal to that of $\Cc(P)$.

\begin{Theorem}
\label{characterization}
The number of facets of $\Oc(P)$ is equal to that of $\Cc(P)$
if and only if the following poset
\bigskip
$$
\xy 0;/r.2pc/: (0,0)*{\circ}="a";
 (8,10)*{\circ}="b";
 (-8,10)*{\circ}="c";
 (-8,-10)*{\circ}="d";
 (8,-10)*{\circ}="e";
  "a"; "b"**\dir{-};
   "a"; "c"**\dir{-};
    "a"; "d"**\dir{-};
     "a"; "e"**\dir{-};
\endxy
$$

\smallskip

\hspace{6.3cm}
{\rm \bf \small Figure\,\,1}

\bigskip
\smallskip

\noindent
does not appear as a subposet {\em (}\cite[p. 243]{StanleyEC}{\em )} of $P$.
\end{Theorem}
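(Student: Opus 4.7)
The plan is to induct on $d=|P|$, turning the inequality of Corollary 1.2 into an exact identity. Fix a minimal element $\alpha\in P$, write $P'=P\setminus\{\alpha\}$, and keep the notation $\beta_1,\ldots,\beta_s$ and $N_1,\ldots,N_s$ from the proof of Corollary 1.2. Summing the four displayed equalities there yields
\[
\bigl(d+c(P)\bigr)-\bigl(m_*(P)+m^*(P)+h(P)\bigr)\;=\;\bigl((d-1)+c(P')\bigr)-\bigl(m_*(P')+m^*(P')+h(P')\bigr)+\sum_{i=1}^{s}(N_i-1).
\]
Writing $\mathrm{gap}(Q)$ for the analogous quantity attached to a poset $Q$, this says $\mathrm{gap}(P)\ge \mathrm{gap}(P')$, with equality iff $N_i=1$ for every $i$.

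The combinatorial key is the equivalence: $N_i=1$ for all $i$ iff no element $y\ge\beta_i$ in $P$ is covered by two or more elements. Indeed $N_i$ counts saturated chains going up from $\beta_i$, and these branch precisely when some such $y$ has $\ge 2$ upper covers. Given such a $y$, along with two lower covers of $\beta_i$ (namely $\alpha$ and some $\alpha'$) and two upper covers $b,c$ of $y$, one exhibits the poset of Figure 1 as a subposet of $P$: $\beta_i$ plays the role of the central vertex, any intermediate vertices along the saturated chain from $\beta_i$ to $y$ being discarded. Conversely, every copy of Figure 1 in $P$ whose bottom branching involves $\alpha$ forces some $N_i\ge 2$. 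A second observation is needed: since $\alpha$ is minimal, the cover relations among elements of $P'$ coincide with those in $P$, so Figure 1 appears as a subposet of $P'$ precisely when it appears as a subposet of $P$ in a configuration not involving $\alpha$.

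Combining these two facts, the induction closes in both directions. For $(\Leftarrow)$: if Figure 1 is not a subposet of $P$ then it is not a subposet of $P'$, so $\mathrm{gap}(P')=0$ by induction, and the key equivalence forces all $N_i=1$ (else Figure 1 would appear through $\alpha$), giving $\mathrm{gap}(P)=0$. For $(\Rightarrow)$: $\mathrm{gap}(P)=0$ forces both $\mathrm{gap}(P')=0$ and all $N_i=1$; the inductive hypothesis yields that Figure 1 is not a subposet of $P'$, and the key equivalence precludes any realization of Figure 1 in $P$ using $\alpha$.

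The main obstacle is precisely the equivalence at the heart of the argument: namely, that "$N_i=1$ for all $i$" corresponds exactly to "no copy of Figure 1 passes through $\alpha$". Making this rigorous amounts to recognising that the right formulation is: Figure 1 appears as a subposet of $P$ iff there exist $w\le y$ in $P$ with $w$ having at least two lower covers and $y$ having at least two upper covers. The rest is bookkeeping, including the base case $d=1$ and the handling of isolated $\alpha$ (where $s=0$ and $\mathrm{gap}(P)=\mathrm{gap}(P')$ trivially).
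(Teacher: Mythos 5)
Your strategy is the same as the paper's: induct on $|P|$ by deleting a minimal non-maximal element $\alpha$, and package the four identities from the proof of Corollary \ref{facetinequality} into
$\mathrm{gap}(P)=\mathrm{gap}(P\setminus\{\alpha\})+\sum_{i=1}^{s}(N_i-1)$,
so that equality of facet numbers amounts to ``$\mathrm{gap}(P\setminus\{\alpha\})=0$ and all $N_i=1$.'' This identity, the equivalence ``$N_i=1$ iff no $y\geq\beta_i$ has two upper covers,'' and the whole $(\Leftarrow)$ direction of your induction are correct and match the paper.

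The gap is in the $(\Rightarrow)$ direction, in the sentence ``every copy of Figure 1 in $P$ whose bottom branching involves $\alpha$ forces some $N_i\geq 2$.'' That claim is false. Take $P=\{\alpha,e,z,a,b,c\}$ with cover relations $\alpha\lessdot z\lessdot a$, $e\lessdot a$, $a\lessdot b$, $a\lessdot c$. Then $\{\alpha,e,a,b,c\}$ is a copy of Figure 1 containing $\alpha$, yet the only element covering $\alpha$ is $z$, which covers nothing else, so $s=0$ and there are no $N_i$ at all. What rescues the argument in such cases is that one can always extract from a copy of Figure 1 an element $\delta$ with two distinct lower covers $\xi,\mu$ and two incomparable elements $\phi,\psi$ above it (take $\delta$ minimal among elements below the center and above both bottom elements); then $\{\xi,\mu,\delta,\phi,\psi\}$ is again a copy of Figure 1, and either $\delta$ covers $\alpha$, forcing $\delta=\beta_i$ and $N_i\geq 2$, or this new copy avoids $\alpha$ entirely and the induction hypothesis on $P\setminus\{\alpha\}$ applies. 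In the example above the second branch fires via the copy $\{z,e,a,b,c\}$. This is exactly the case split the paper performs, and it is also what your closing ``right formulation'' (Figure 1 appears iff there exist $w\leq y$ with $w$ having two lower covers and $y$ having two upper covers) would deliver --- but you neither prove that lemma nor use it to replace the false claim; the ``bookkeeping'' you defer is precisely the missing step, so the proof as written does not close.
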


\begin{proof}
The number of facets of $\Oc(P)$ is equal to that of $\Cc(P)$
if and only if, in the proof of Corollary \ref{facetinequality},
each of the inequalities $(\ref{Boston})$ and $(\ref{Sapporo})$
is an equality.

{\bf (``If'')}
Suppose that the poset of Figure $1$ does not appear as a subposet of $P$.
Then, in the proof of Corollary \ref{facetinequality},
one has $N_{i} = 1$ for $1 \leq i \leq s$.
Hence the inequality $(\ref{Sapporo})$ is an equality.
Moreover, the induction hypothesis guarantees that
the inequalities $(\ref{Boston})$ is an equality.
Thus the number of facets of $\Oc(P)$ is equal to that of $\Cc(P)$,
as required.

{\bf (``Only if'')}
Suppose that the poset of Figure $1$ appears as a subposet of $P$.
It then follows easily that
there exist $\delta, \xi, \mu, \phi, \psi$ of $P$ such that
(i) $\delta$ covers $\xi$ and $\mu$, (ii) $\delta < \phi$, $\delta < \psi$, and
(iii) $\phi$ and $\psi$ are incomparable.
\begin{itemize}
\item
If neither $\xi$ nor $\mu$ is a minimal element of $P$, then
the poset of Figure $1$ appears as a subposet of $P \setminus \{ \alpha \}$,
where $\alpha$ is any minimal element of $P$.  Hence
the induction hypothesis guarantees that
the inequality (\ref{Boston}) cannot be an equality.
\item
If either $\xi$ or $\mu$ coincides with a minimal element $\alpha$
of $P$, then, in the proof of Corollary \ref{facetinequality},
one has $N_{i} > 1$ for some $1 \leq i \leq s$.
Hence, the inequality (\ref{Sapporo}) cannot be an equality.
\end{itemize}
Hence, at least one of the inequalities (\ref{Boston}) and (\ref{Sapporo}) cannot be an equality.
Thus the number of facets of $\Oc(P)$ is less than that of $\Cc(P)$.
\, \, \, \, \, \, \, \, \, \, \, \, \, \, \, \, \, \, \, \,
\, \, \, \, \, \, \, \, \, \, \, \, \,
\end{proof}

\section{Unimodular equivalence}
Let $\ZZ^{d \times d}$ denote the set of $d \times d$ integral matrices.
Recall that a matrix $A \in \ZZ^{d \times d}$ is {\em unimodular} if $\det (A) = \pm 1$.
Given integral polytopes $\Pc \subset \RR^{d}$ of dimension $d$ and $\Qc \subset \RR^{d}$
of dimension $d$,
we say that $\Pc$ and $\Qc$ are {\em unimodularly equivalent}
if there exist a unimodular matrix $U \in \ZZ^{d \times d}$
and an integral vector ${\bf w} \in \ZZ^{d}$
such that $\Qc=f_U(\Pc)+{\bf w}$,
where $f_U$ is the linear transformation of $\RR^d$ defined by $U$,
i.e., $f_U({\bf v}) = {\bf v} \, U$ for all ${\bf v} \in \RR^d$.

Now, we wish to solve our pending problem when $\Oc(P)$ and $\Cc(P)$
are unimodularly equivalent.

\begin{Theorem}
\label{unimodular}
The order polytope $\Oc(P)$ and the chain polytope $\Cc(P)$ of a finite poset $P$
are unimodularly equivalent
if and only if the poset of Figure $1$ of Theorem \ref{characterization}
does not appear as a subposet of $P$.
\end{Theorem}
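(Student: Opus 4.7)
The plan is to treat the two directions separately. For ``only if,'' note that unimodular equivalence preserves the number of facets, so if $\Oc(P)$ and $\Cc(P)$ are unimodularly equivalent, Theorem \ref{characterization} forces the poset of Figure~1 not to occur as a subposet of $P$.

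For the ``if'' direction, assume the poset of Figure~1 does not appear as a subposet of $P$, and construct an explicit unimodular affine map $f\colon \RR^d \to \RR^d$ sending $\Oc(P)$ onto $\Cc(P)$. Call $x \in P$ of type $\mathrm{A}$ if every pair of elements greater than $x$ is comparable (equivalently $x$ has at most one cover), and of type $\mathrm{B}$ if every pair of elements smaller than $x$ is comparable (equivalently $x$ covers at most one element). The hypothesis is equivalent to saying every element is of type $\mathrm{A}$ or $\mathrm{B}$. Now assign to each $x$ an orientation $\epsilon(x) \in \{\mathrm{up}, \mathrm{down}\}$ by the following rule: set $\epsilon(x) = \mathrm{down}$ whenever $x$ has at least two covers; set $\epsilon(x) = \mathrm{up}$ whenever $x$ covers at least two elements; propagate along cover relations (if $\epsilon(x) = \mathrm{down}$ and $x$ is non-minimal, set $\epsilon(d(x)) = \mathrm{down}$ for the unique covered element $d(x)$, and dually for up-propagation); default any leftover element to $\mathrm{up}$. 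No clash arises, because if some $x$ is forced up (so $x$ has two incomparable elements below) and some $y \geq x$ is forced down (so $y$ has two incomparable elements above), then $y$ inherits two incomparable elements below from $x$, producing an X subposet centred at $y$. Consequently $D = \{x : \epsilon(x) = \mathrm{down}\}$ is a poset ideal and $U = P \setminus D$ a poset filter.

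Define $f$ coordinate-wise by
\[
f(a)_x = \begin{cases} a_x - a_{u(x)} & \text{if } \epsilon(x) = \mathrm{up} \text{ and } x \text{ is non-maximal}, \\ a_x & \text{if } \epsilon(x) = \mathrm{up} \text{ and } x \text{ is maximal}, \\ a_{d(x)} - a_x & \text{if } \epsilon(x) = \mathrm{down} \text{ and } x \text{ is non-minimal}, \\ 1 - a_x & \text{if } \epsilon(x) = \mathrm{down} \text{ and } x \text{ is minimal}, \end{cases}
\]
where $u(x)$ (resp.\ $d(x)$) is the unique cover (resp.\ covered element) of $x$ whenever $\epsilon(x) = \mathrm{up}$ (resp.\ $\mathrm{down}$). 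Order the elements of $P$ by first taking a linear extension of $D$ and then a linear extension of $U$; in this ordering the matrix of the linear part of $f$ is block-diagonal, with a lower-triangular $D$-block carrying $-1$'s on the diagonal and an upper-triangular $U$-block carrying $+1$'s on the diagonal, so $\det f = \pm 1$ and $f$ is unimodular.

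It remains to verify $f(\Oc(P)) = \Cc(P)$. Along every maximal chain $x_{i_1} < \cdots < x_{i_k}$ of $P$ the orientations form the pattern $(\mathrm{down}, \ldots, \mathrm{down}, \mathrm{up}, \ldots, \mathrm{up})$ since $D$ is a poset ideal, and $\sum_j f(a)_{i_j}$ telescopes to $1 - (a_{x_{i_s}} - a_{x_{i_{s+1}}}) \leq 1$, where $s$ is the switching index (with the natural interpretation when the chain is uniformly up or uniformly down). Each $f(a)_{i_j} \geq 0$ follows from the defining inequalities of $\Oc(P)$. Thus $f(\Oc(P)) \subseteq \Cc(P)$, and since $f$ is volume-preserving while both polytopes have volume $e(P)/d!$, this inclusion forces equality. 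The main technical hurdle is confirming that the orientation assignment is consistent and that $D$ really is a poset ideal; both steps rely crucially on the exclusion of the X subposet.
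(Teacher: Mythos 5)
Your argument is correct and is essentially the paper's: the same facet-counting reduction via Theorem \ref{characterization} for the ``only if'' direction, and for ``if'' the same down/up dichotomy of elements enabled by excluding the X, yielding a block-triangular unimodular affine map that is (up to which doubly-unique elements one declares ``down'') just the inverse of the paper's substitution $\Psi$ written in differenced rather than summed form, finished by the identical volume comparison; your write-up is in fact more explicit than the paper's about well-definedness of the orientation and about unimodularity. The only blemish is the parenthetical ``every pair of elements greater than $x$ is comparable (equivalently $x$ has at most one cover)'' --- these two conditions are not equivalent --- but your construction never uses that equivalence, relying instead on the correct forcing rules and on the observation that a clash would create an element with two incomparable elements above and two below.
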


\begin{proof}
{\bf (``Only if'')}
If $\Oc(P)$ and $\Cc(P)$ are unimodularly equivalent, then in particular the number of facets
of $\Oc(P)$ and that of $\Cc(P)$ coincides.  Hence by virtue of Theorem \ref{characterization}
the poset of Figure $1$ does not appear as a subposet of $P$.

{\bf (``If'')}
Let $P = \{x_{1}, \ldots, x_{d}\}$ and
suppose that the poset of Figure $1$ does not appear as a subposet of $P$.
Fix $x \in P$ which is neither minimal nor maximal.
Then at least one of the following conditions are satisfied:
\begin{itemize}
\item
there is a unique saturated chain of the form $x = x_{i_{0}} > x_{i_{1}} > \cdots > x_{i_{k}}$,
where $x_{i_{k}}$ is a minimal element of $P$;
\item
there is a unique saturated chain of the form $x = x_{j_{0}} < x_{j_{1}} < \cdots < x_{j_{\ell}}$,
where $x_{i_{\ell}}$ is a maximal element of $P$.
\end{itemize}
Now, identifying $x_{1}, \ldots, x_{d}$ with the coordinates of $\RR^{d}$,
we introduce the affine map $\Psi \, : \RR^{d} \to \RR^{d}$ defined as follows:
\begin{itemize}
\item
$\Psi(x_{i}) = 1 - x_{i}$ if $x_{i} \in P$ is minimal, but not maximal;
\item
$\Psi(x_{i}) = x_{i}$ if $x_{i} \in P$ is maximal;
\item
Let $x_{i}$ be neither minimal nor maximal.
If there is a unique saturated chain of the form $x = x_{i_{0}} > x_{i_{1}} > \cdots > x_{i_{k}}$,
where $x_{i_{k}}$ is a minimal element of $P$, then
\[
\Psi(x_{i}) = 1 - x_{i_{0}} - x_{i_{1}} - \cdots - x_{i_{k}};
\]
\item
Let $x_{i}$ be neither minimal nor maximal.
If there exist at least two saturated chains of the form
$x_{i} = x_{i_{0}} > x_{i_{1}} > \cdots > x_{i_{k}}$,
where $x_{i_{k}}$ is a minimal element of $P$, and if
there is a unique saturated chain of the form $x_{i} = x_{j_{0}} < x_{j_{1}} < \cdots < x_{j_{\ell}}$,
where $x_{j_{\ell}}$ is a maximal element of $P$, then
\[
\Psi(x_{i}) = x_{i} + x_{j_{1}} + \cdots + x_{j_{\ell}}.
\]
\end{itemize}
It is routine work to show that if $\Fc$ is a facet of $\Oc(P)$, then $\Psi(\Fc)$ is a facet of
$\Cc(P)$. We will prove this claim with the help of Example \ref{eg}. 

In fact, there are three types of facets for $\Oc(P)$:
\begin{enumerate}
\item [1)] a minimal element $x\le 1$;
\item [2)] a maximal element $y\ge 0$;
\item [3)] a cover relation $x\le y$ if $x$ covers $y$ in $P$.
\end{enumerate}
There are two types of facets for $\Cc(P)$: 
\begin{enumerate}
\item [1')] for each element in the poset $x\ge 0$;
\item [2')] each maximal chain $\sum_{i\in C}x_i\le 1$.
\end{enumerate}
 In Example \ref{eg}, $x_1\le 1$ is mapped to $1-x_1\le 1$, which is $x_1\ge 0$. For type 3) facets $x\le y$ of $\Oc(P)$, there are three cases. For any $x\in P$, if there is a unique saturated chain starting at $x$ going down to a minimal element, we call $x$ a \emph{down element}, otherwise, if there exists at two such chains, we call $x$ an \emph{up element}. Then there are two cases for facets of the form $x\le y$ of $\Oc(P)$.
 \begin{enumerate}
 \item [a)]Both $x$ and $y$ are down elements, then this facet is sent to 1') facet of $\Cc(P)$: $x\ge 0$. In Example \ref{eg}, $x_2\le x_1$ is mapped to $ 1-x_1-x_2\le 1-x_1$, which is $x_2\ge 0$.
 \item [b)]Both $x$ and $y$ are up elements, then this facet is sent to 1') facet of $\Cc(P)$: $y\ge 0$. In Example \ref{eg}, $x_9\le x_7$ is mapped to $ x_{9}+x_{11}\le x_7+x_9+x_{11}$, which is $x_7\ge 0$.
 \item [c)]If $x$ is up and $y$ is down, then this facet is sent to a type 2') facet of $\Cc(P)$. In Example \ref{eg}, $x_7\le x_2$ is mapped to $x_7+x_9+x_{11}\le 1-x_1-x_2 $, which is $x_1+x_2+x_7+x_9+x_{11}\le 1 $.
 \end{enumerate}
 Hence $\Psi(\Oc(P)) = \Cc(P)$.  Thus $\Oc(P)$ and $\Cc(P)$ are affinely equivalent.
Moreover, since $\Psi(\ZZ^{n}) = \ZZ^{n}$ and since the volume of $\Oc(P)$ coincides with
that of $\Cc(P)$, it follows that $\Oc(P)$ and $\Cc(P)$ are unimodularly equivalent.
\, \, \, \, \, \, \, \, \, \, \, \, \, \, \, \, \, \, \, \, \, \, \, \, \, \, \, \, \,
\end{proof}

\begin{Example}
{\em
\label{eg}Consider the following poset.
$$ \xy 0;/r.2pc/:
(0,0)*{x_1}="1";
 (0,10)*{x_2}="2";
 (10,10)*{x_3}="3";
 (20,10)*{x_4}="4";
 (-10,20)*{x_5}="5";
 (0,20)*{x_6}="6";
 (10,20)*{x_7}="7";
 (-10,30)*{x_8}="8";
 (10,30)*{x_9}="9";
 (-10,40)*{x_{10}}="10";
 (10,40)*{x_{11}}="11";
   "10"; "8"**\dir{-};
    "5"; "8"**\dir{-};
     "2"; "6"**\dir{-};
      "1"; "2"**\dir{-};
    "7"; "3"**\dir{-};
     "7"; "9"**\dir{-};
      "11"; "9"**\dir{-};
    "4"; "7"**\dir{-};
     "2"; "7"**\dir{-};
      "6"; "9"**\dir{-};
    "6"; "8"**\dir{-};
\endxy$$

\hspace{6.3cm}
{\rm \bf \small Figure\,\,2}

\noindent
Both $\Oc(P)$ and $\Cc(P)$ have $17$ facets. Facets of $\Oc(P)$ are
$$\begin{array}{cccccc}
  x_1\le 1, & x_3\le 1, & x_4\le 1, & x_5\le 1, & x_{10}\ge 0, & x_{11}\ge 0, \\
  x_1\ge x_2, & x_2\ge x_7, & x_3\ge x_7, & x_4\ge x_7, & x_2\ge x_6, & x_5\ge x_8, \\
  x_6\ge x_8, & x_6\ge x_9, & x_7\ge x_9, & x_8\ge x_{10}, & x_9\ge x_{11}, &
\end{array}$$
Facets of $\Cc(P)$ are
$$\begin{array}{cccccc}
  x_1\ge 0, & x_2\ge 0, & x_3\ge 0, & x_4\ge 0, & x_{5}\ge 0, & x_{6}\ge 0, \\
  x_{7}\ge 0, & x_{8}\ge 0, & x_{9}\ge 0, & x_{10}\ge 0, & x_{11}\ge 0, & \\
\end{array}$$
$$\begin{array}{ccc}
  x_1+x_2+x_7+x_9+x_{11}\le 1, & x_3+x_7+x_9+x_{11}\le 1, & x_4+x_7+x_9+x_{11}\le 1, \\
  x_1+x_2+x_6+x_8+x_{10}\le 1, & x_5+x_8+x_{10}\le 1, & x_1+x_2+x_6+x_9+x_{11}\le 1. \\
\end{array}$$
Here is the map $\Psi$ defined in Theorem \ref{unimodular}:
$$\begin{array}{cccccc}
  x_1\mapsto 1-x_1, & x_2\mapsto 1-x_1-x_2, & x_3\mapsto 1-x_3, & x_4\mapsto 1-x_4, \\
   x_5\mapsto 1-x_5, & x_6\mapsto 1-x_6-x_2-x_1, & x_7\mapsto x_7+x_9+x_{11}, & x_8\mapsto x_8+x_{10}, \\
    x_9\mapsto x_9+x_{11}, & x_{10}\mapsto x_{10}, & x_{11}\mapsto x_{11}. &
\end{array}
$$
}
\end{Example}

\begin{Corollary}
\label{Corollary}
Given a finite poset $P$, the following conditions are equivalent:
\begin{enumerate}
\item[(i)] $\Oc(P)$ and $\Cc(P)$ are unimodularly equivalent;
\item[(ii)] $\Oc(P)$ and $\Cc(P)$ are affinely equivalent;
\item[(iii)] $\Oc(P)$ and $\Cc(P)$ have the same $f$-vector {\rm (\cite[p. 12]{HibiRedBook})};
\item[(iv)] The number of facets of $\Oc(P)$ is equal to that of $\Cc(P)$;
\item[(v)] The poset of Figure $1$ of Theorem \ref{characterization} does not appear
as a subposet of $P$.
\end{enumerate}
\end{Corollary}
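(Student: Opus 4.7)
The plan is to prove the equivalences by establishing a chain of implications $(\text{i})\Rightarrow(\text{ii})\Rightarrow(\text{iii})\Rightarrow(\text{iv})\Rightarrow(\text{v})\Rightarrow(\text{i})$, with the two hard steps $(\text{iv})\Rightarrow(\text{v})$ and $(\text{v})\Rightarrow(\text{i})$ already supplied by Theorems \ref{characterization} and \ref{unimodular}.

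The implication $(\text{i})\Rightarrow(\text{ii})$ is immediate: a unimodular equivalence $\Qc = f_U(\Pc)+\mathbf{w}$ is, by definition, a particular affine bijection between $\Pc$ and $\Qc$. For $(\text{ii})\Rightarrow(\text{iii})$, I would recall that any affine isomorphism between two full-dimensional polytopes induces an isomorphism of their face lattices, and so preserves the number of faces in every dimension; this gives equality of the entire $f$-vectors of $\Oc(P)$ and $\Cc(P)$. The implication $(\text{iii})\Rightarrow(\text{iv})$ is trivial, since the number of facets is the entry $f_{d-1}$ of the $f$-vector.

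For $(\text{iv})\Rightarrow(\text{v})$, the contrapositive is precisely the ``only if'' direction of Theorem \ref{characterization}: if the poset of Figure~1 appears as a subposet of $P$, then the number of facets of $\Oc(P)$ is strictly less than that of $\Cc(P)$. Finally, $(\text{v})\Rightarrow(\text{i})$ is exactly the ``if'' direction of Theorem \ref{unimodular}, which produces the explicit piecewise-defined unimodular map $\Psi$ showing that $\Oc(P)$ and $\Cc(P)$ are unimodularly equivalent whenever Figure~1 does not appear as a subposet. Closing the cycle yields the equivalence of all five conditions.

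There is essentially no obstacle beyond invoking the two main theorems; the only mild point worth a sentence is the face-lattice preservation used in $(\text{ii})\Rightarrow(\text{iii})$, which follows because affine isomorphisms send supporting hyperplanes to supporting hyperplanes and hence faces to faces bijectively, preserving inclusions and dimensions.
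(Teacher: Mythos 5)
Your proof is correct and is essentially the argument the paper intends: the corollary is stated without proof precisely because it follows by chaining (i)$\Rightarrow$(ii)$\Rightarrow$(iii)$\Rightarrow$(iv), then invoking Theorem \ref{characterization} for (iv)$\Leftrightarrow$(v) and the ``if'' direction of Theorem \ref{unimodular} for (v)$\Rightarrow$(i). Your added remark on why affine isomorphisms preserve the $f$-vector is a correct and harmless elaboration of a step the paper leaves implicit.
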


\begin{Conjecture}
Let $P$ be a finite poset with $|P| = d > 1$.
Let $f(\Oc(P)) = (f_{0}, f_{1}, \ldots, f_{d-1})$ denote the $f$-vector of $\Oc(P)$ and
$f(\Cc(P)) = (f'_{0}, f'_{1}, \ldots, f'_{d-1})$ the $f$-vector of $\Cc(P)$.  Then
\begin{enumerate}
\item[(a)] $f_{i} \leq f'_{i}$ for all $1 \leq i \leq d-1$.
\item[(b)] If $f_{i} = f'_{i}$ for some $1 \leq i \leq d-1$, then $\Oc(P)$ and $\Cc(P)$
are unimodularly equivalent.
\end{enumerate}
\end{Conjecture}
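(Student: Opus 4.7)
My plan is to prove both parts of the conjecture simultaneously by combining Corollary \ref{Corollary} with an inductive analysis of the face structures of both polytopes, in the spirit of the arguments in Section 1. First, note that parts (a) and (b) together are equivalent to the single sharper statement: the poset of Figure 1 appears as a subposet of $P$ if and only if $f_i < f'_i$ strictly for every $1 \leq i \leq d-1$. Indeed, if the poset of Figure 1 does not appear in $P$, then by Corollary \ref{Corollary} the polytopes $\Oc(P)$ and $\Cc(P)$ are unimodularly equivalent, so $f_i = f'_i$ for every $i$, while conversely the strict inequality $f_i < f'_i$ obviously prevents unimodular equivalence. The facet case $i = d-1$ is exactly Theorem \ref{characterization}, so the new content lies in intermediate dimensions.

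For part (a), my plan is to construct an explicit injection $\Lambda_i$ from the set of $i$-faces of $\Oc(P)$ into the set of $i$-faces of $\Cc(P)$. Using Stanley's parametrization from \cite{Stanley}, faces of $\Oc(P)$ are in bijection with order-compatible partitions of $\hat{P} = P \cup \{\hat{0},\hat{1}\}$ into connected blocks, while faces of $\Cc(P)$ admit an analogous description built from antichains together with the list of tight maximal chains. On vertices the natural candidate is the classical bijection $\rho(I) \mapsto \rho(\max I)$ underlying Stanley's transfer map; for higher-dimensional faces I would extend this by sending each order-compatible partition, together with its induced block ordering, to the face of $\Cc(P)$ cut out by the chain-and-antichain data naturally associated to the partition, and then verify injectivity by tracking how the piecewise-linear chambers of the transfer map meet the boundary strata.

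To establish part (b), assuming the poset of Figure 1 appears in $P$, realized by elements $\delta, \xi, \mu, \varphi, \psi$ as in the proof of Theorem \ref{characterization}, I would exhibit for each $1 \leq i \leq d-1$ an \emph{excess} $i$-face of $\Cc(P)$ lying outside the image of $\Lambda_i$. At the facet level these are exactly the maximal-chain facets of $\Cc(P)$ passing through both $\varphi$ and $\psi$, whose existence produces the strict inequality (\ref{Sapporo}); for lower $i$ the plan is to obtain excess $i$-faces by intersecting such excess facets with suitably chosen collections of the remaining facets of $\Cc(P)$. The induction would then proceed as in the proof of Corollary \ref{facetinequality} by removing a minimal element $\alpha \in P$, separating the $i$-faces according to whether they touch $\alpha$, and applying the inductive hypothesis to the subposet $P \setminus \{\alpha\}$.

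The main technical obstacle I anticipate is the well-definedness and injectivity of $\Lambda_i$ at intermediate dimensions. Stanley's transfer map is piecewise-linear rather than linear, so it does not directly induce a face-to-face map, and the face lattice of $\Cc(P)$ is noticeably more delicate than that of $\Oc(P)$: distinct antichain configurations can collapse in the face lattice of the chain polytope in ways that have no analogue for the order polytope. If constructing $\Lambda_i$ directly proves intractable, an alternative is to bypass the face injection entirely and derive generating-function-style formulas for $f_i$ and $f'_i$ as sums over suitable combinatorial objects on $P$, and then compare these sums term by term. In either approach the base cases $d \leq 4$ can be checked directly, since the Figure 1 subposet requires at least five elements.
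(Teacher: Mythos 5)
This statement is presented in the paper as an open \emph{Conjecture}; the paper contains no proof of it, so there is nothing to compare your argument against. What you have written is a research plan rather than a proof, and as it stands it has a genuine gap: the central object, the injection $\Lambda_i$ from $i$-faces of $\Oc(P)$ to $i$-faces of $\Cc(P)$, is never constructed. You describe it only as ``the face of $\Cc(P)$ cut out by the chain-and-antichain data naturally associated to the partition,'' but no such association is specified, and you yourself identify the obstruction: Stanley's transfer map is piecewise-linear, so it does not carry faces to faces, and the face lattice of $\Cc(P)$ (the stable-set polytope of the comparability graph of $P$) has no description as clean as the order-compatible-partition description of the face lattice of $\Oc(P)$. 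Everything else in your plan is downstream of $\Lambda_i$: part (b) is reduced to exhibiting an ``excess'' $i$-face outside the image of $\Lambda_i$, which is meaningless until $\Lambda_i$ exists and is known to be injective, and the proposed induction on $P\setminus\{\alpha\}$ cannot be carried out for intermediate-dimensional faces without first knowing how the $i$-faces of $\Oc(P)$ and $\Cc(P)$ behave under deletion of a minimal element (the facet counts in Lemma \ref{numberoffacet} have no known analogue for general $f_i$). Your opening reduction --- that, granting Corollary \ref{Corollary}, parts (a) and (b) together are equivalent to ``Figure 1 occurs in $P$ iff $f_i<f'_i$ for all $1\le i\le d-1$'' --- is correct and is a reasonable way to organize an attack, and the observation that the case $i=d-1$ is exactly Theorem \ref{characterization} is also correct. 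But the cases $1\le i\le d-2$ remain entirely open in your write-up, so this does not constitute a proof of the conjecture.
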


{}


\begin{thebibliography}{}
\bibitem{Hibi}
T. Hibi,
Distributive lattices, affine semigroup rings and algebras with straightening laws,
{\em in} ``Commutative Algebra and Combinatorics''
(M.\ Nagata and H.\ Matsumura, Eds.),
Advanced Studies in Pure Math., Volume 11, North--Holland, Amsterdam, 1987, pp. 93 -- 109.
\bibitem{HibiRedBook}
T. Hibi,
``Algebraic combinatorics on convex polytopes,''
Carslaw Publications, Glebe, N.S.W., Australia, 1992.
\bibitem{TN}
T. Hibi and N. Li,
Chain polytopes and algebras with straightening laws,
arXiv:1207.2538.
\bibitem{Stanley}
R. Stanley,
Two poset polytopes, {\em Discrete Comput. Geom.} {\bf 1} (1986), 9 -- 23.
\bibitem{StanleyEC}
R. Stanley,
``Enumerative Combinatorics, Volume I,''
Second Ed., Cambridge University Press, Cambridge, 2012.
\end{thebibliography}
\end{document}